\newtheorem{Lemma}{Lemma}
\newtheorem{Sublemma}{Sublemma}
\newtheorem{Theorem}{Theorem}
\newtheorem{Definition}{Definition}
\newtheorem{Corollary}{Corollary}
\newtheorem{Remark}{Remark}
\numberwithin{Subcase}{Case}
\DeclareMathOperator{\inter}{int}
\DeclareMathOperator{\x}{\mathbf{x}}
\DeclareMathOperator{\z}{\mathbf{z}}
\DeclareMathOperator{\oo}{\mathbf{o}}
\DeclareMathOperator{\K}{\mathbf{K}}
\DeclareMathOperator{\E}{\mathbb{E}}
\DeclareMathOperator{\B}{\mathbf{B}}
\DeclareMathOperator{\C}{\mathbf{C}}
\DeclareMathOperator{\Q}{\mathbf{Q}}
\DeclareMathOperator{\vol}{vol}
\title{Minimizing the mean projections of finite $\rho$-separable packings
\footnote{Keywords: totally separable packing, translative packing, density, $\rho$-separable packing, convex body, volume, mean projection.}
\footnote{2010 Mathematics Subject Classification: 52C17, 05B40, 11H31, and 52C45.}}
\author{K\'{a}roly Bezdek\thanks{Partially supported by a Natural Sciences and 
Engineering Research Council of Canada Discovery Grant.}
and Zsolt L\'angi\thanks{Partially supported by the National Research, Development and Innovation Office, NKFI, K-119670.}}
\begin{document}

\maketitle

\begin{abstract}
A packing of translates of a convex body in the $d$-dimensional Euclidean space $\E^d$ is said to be totally separable if any two packing elements can be separated by a hyperplane of $\mathbb{E}^{d}$ disjoint from the interior of every packing element. We call the packing $\cal P$ of translates of a centrally symmetric convex body $\C$ in $\E^d$ a $\rho$-separable packing for given $\rho\geq 1$ if in every ball concentric to a packing element of $\cal P$ having radius $\rho$ (measured in the norm generated by $\C$) the corresponding sub-packing of $\cal P$ is totally separable. The main result of this paper is the following theorem. Consider the convex hull $\Q$ of $n$ non-overlapping translates of an arbitrary centrally symmetric convex body $\C$ forming a $\rho$-separable packing in $\E^d$ with $n$ being sufficiently large for given $\rho\geq 1$. If $\Q$ has minimal mean $i$-dimensional projection for given $i$ with $1\leq i<d$, then $\Q$ is approximately a $d$-dimensional ball. This extends a theorem of K. B\"or\"oczky Jr. [Monatsh. Math. \textbf{118} (1994), 41--54] from translative packings to $\rho$-separable translative packings for $\rho\geq 1$.
\end{abstract}

\section{Introduction}\label{sec:intro}
We denote the $d$-dimensional Euclidean space by $\E^d$. Let $\B^d$ denote the unit ball centered at the origin $\oo$ in $\E^d$. A {\it $d$-dimensional convex body} $\C$ is a compact convex subset of $\E^d$ with non-empty interior $\inter \C$. (If $d=2$, then $\C$ is said to be a {\it convex domain}.) If $\C = -\C$, where $-\C=\{-x: x\in \C\}$, then $\C$ is said to be {\it $\oo$-symmetric} and a translate $\mathbf{c}+\C$ of $\C$ is called centrally symmetric with center $\mathbf{c}$.

The starting point as well as the main motivation for writing this paper is the following elegant theorem of B\"or\"oczky Jr. \cite{Bo}: Consider the convex hull $\Q$ of $n$ non-overlapping translates of an arbitrary convex body $\C$ in $\E^d$ with $n$ being sufficiently large. If $\Q$ has minimal mean $i$-dimensional projection for given $i$ with $1\leq i<d$, then $\Q$ is approximately a $d$-dimensional ball. In this paper, our main goal is to prove an extension of this theorem to $\rho$-separable translative packings of convex bodies in $\E^d$. Next, we define the concept of $\rho$-separable translative packings and then state our main result.

A packing of translates of a convex domain $\C$ in $\E^2$ is said to be {\it totally separable} if any two packing elements can be separated by a line of $\mathbb{E}^{2}$ disjoint from the interior of every packing element. This notion was introduced by G. Fejes T\'{o}th and L. Fejes T\'{o}th \cite{FTFT73} . We can define a totally separable packing of translates of a $d$-dimensional convex body $\C$ in a similar way by requiring any two packing elements to be separated by a hyperplane in $\mathbb{E}^{d}$ disjoint from the interior of every packing element \cite{BeKh, BeSzSz}. 

\begin{Definition}
Let $\mathbf{C}$ be an $\oo$-symmetric convex body of  $\mathbb{E}^d$. Furthermore, let $\|\cdot\|_{\mathbf{C}}$ denote the norm generated by $\mathbf{C}$, i.e., let $\|\mathbf{x}\|_{\mathbf{C}}:=\inf\{\lambda\ |\ \mathbf{x}\in\lambda \mathbf{C}\}$ for any $\mathbf{x}\in \mathbb{E}^d$. Now, let $\rho\ge 1$. We say that the packing 
$${\cal P}_{{\rm sep}}:=\{\mathbf{c}_i+\mathbf{C}\ |\ i\in I \ {\rm with}\ \| \mathbf{c}_j-\mathbf{c}_k\|_{\mathbf{C}}\ge 2 \ {\rm for\ all}\ j\neq k\in I\}$$ 
of (finitely or infinitely many) non-overlapping translates of $\mathbf{C}$ with centers $\{\mathbf{c}_i\ |\ i\in I\}$ is a {\rm $\rho$-separable packing} in $\mathbb{E}^d$ if for each $i\in I$ the finite packing $\{\mathbf{c}_j+\mathbf{C}\ |\ \mathbf{c}_j+\mathbf{C}\subseteq \mathbf{c}_i+\rho\mathbf{C}\}$ is a totally separable packing (in $\mathbf{c}_i+\rho\mathbf{C}$). Finally, let $\delta_{{\rm sep}}(\rho, \mathbf{C})$ denote the largest density of all $\rho$-separable translative packings of $\mathbf{C}$ in $\mathbb{E}^d$, i.e., let
$$\delta_{{\rm sep}}(\rho, \mathbf{C}):=\sup_{{\cal P}_{\rm sep}}\left(\limsup_{\lambda\to+\infty}\frac{\sum_{\mathbf{c}_i+\mathbf{C}\subset\mathbf{W}_{\lambda}^d}{\rm vol}_d(\mathbf{c}_i+\mathbf{C})}{{\rm vol}_d(\mathbf{W}_{\lambda}^d)}\right)\ , $$
where $\mathbf{W}_{\lambda}^d$ denotes the $d$-dimensional cube of edge length $2\lambda$ centered at $\mathbf{o}$ in $\mathbb{E}^d$ having edges parallel to the coordinate axes of $\mathbb{E}^d$ and ${\rm vol}_d(\cdot)$ refers to the $d$-dimensional volume of the corresponding set in $\E^d$.
\end{Definition}

\begin{Remark}
Let $\delta(\mathbf{C})$ (resp., $\delta_{{\rm sep}}(\mathbf{C})$) denote the supremum of the upper densities of all translative packings (resp., totally separable translative packings) of
the $\oo$-symmetric convex body $\mathbf{C}$ in $\mathbb{E}^d$. Clearly, $\delta_{{\rm sep}}(\mathbf{C})\leq\delta_{{\rm sep}}(\rho, \mathbf{C})\leq \delta(\mathbf{C})$  for all $\rho\geq 1$. Furthermore, if $1\le \rho< 3$, then any $\rho$-separable translative packing of $\mathbf{C}$ in $\mathbb{E}^d$ is simply a translative packing of $\C$ and therefore, $\delta_{{\rm sep}}(\rho, \mathbf{C})=\delta(\mathbf{C})$. 
\end{Remark}

Recall that the mean $i$-dimensional projection $M_i(\C)$ ($i=1,2,\ldots,d-1$) of the convex body $\C$  in $\E^d$, can be expressed (\cite{Sch}) with the help of mixed volume via the formula 
\[
M_i(\C) = \frac{\kappa_i}{\kappa_d} V(\overbrace{\C,\ldots,\C}^i,\overbrace{\B^d,\ldots,\B^d}^{d-i}),
\]
where $\kappa_d$ is the volume of $\B^d$ in $\E^d$. Note that $M_i(\B^d) = \kappa_i$, and the surface area of $\C$ is $S(\C)= \frac{d \kappa_{d}}{\kappa_{d-1}} M_{d-1}(\C)$ and in particular, $S(\B^d)=d\kappa_d$. Set $M_d(\C):={\rm vol}_d(\C)$. Finally, let $R(\C)$ (resp., $r(\C)$) denote the circumradius (resp., inradius) of the convex body $\C$ in $\E^d$, which is the radius of the smallest (resp., largest) ball that contains (resp., is contained in) $\C$. Our main result is the following.

\begin{Theorem}\label{main-result}
Let $d \geq 2$, $1\leq i\leq d-1$, $\rho\geq 1$, and let $\Q$ be the convex hull of the $\rho$-separable packing of $n$ translates of the $\oo$-symmetric convex body $\C$ in $\E^d$ such that $M_i(\Q)$ is minimal and
$n \geq \frac{4^d d^{4d}}{\delta_{{\rm sep}}(\rho,\C)^{d-1}} \cdot \left( \rho \frac{R(\C)}{r(\C)}\right)^d$. Then
\begin{equation}\label{main}
\frac{r(\Q)}{R(\Q)} \geq 1 - \frac{\omega}{n^{\frac{2}{d(d+3)}}},
\end{equation}
for $\omega = \lambda(d) \left( \frac{\rho R(\C)}{r(\C)} \right)^{\frac{2}{d+3}}$, where $\lambda(d)$ depends only on the dimension $d$. In addition, 
\[
M_i(\Q) = \left( 1+ \frac{\sigma}{n^{\frac{1}{d}}} \right) M_i(\B^d) \left( \frac{{\rm vol}_d(\C)}{\delta_{{\rm sep}}(\rho,\C) \kappa_d} \right)^{\frac{i}{d}} \cdot n^{\frac{i}{d}},
\]
where $- \frac{2.25 R(\C) \rho d i}{r(\C) \delta_{\rm sep}(\rho,\C)} \leq \sigma \leq \frac{ 2.1 R(\C) \rho i}{r(\C) \delta_{\rm sep}(\rho,\C)}$.
\end{Theorem}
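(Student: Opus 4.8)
The plan is to sandwich the minimum of $M_i$ between an explicit construction (upper bound) and a volume-based lower bound, and then to convert the resulting near-equality in the Alexandrov--Fenchel (generalized isoperimetric) inequality into the roundness estimate~\eqref{main} by a stability argument; a minimizer $\Q$ exists by a standard compactness argument. Write $V_0:=\frac{n\,\vol_d(\C)}{\delta_{\rm sep}(\rho,\C)}$, and recall that for the quermassintegrals $W_j(\Q):=V(\Q[d-j],\B^d[j])$ one has $W_0=\vol_d(\Q)$, $W_d=\kappa_d$, $M_i(\Q)=\frac{\kappa_i}{\kappa_d}W_{d-i}(\Q)$, $S(\Q)=\frac{d\kappa_d}{\kappa_{d-1}}M_{d-1}(\Q)=d\,W_1(\Q)$, and, by the Alexandrov--Fenchel inequalities, $j\mapsto\log W_j(\Q)$ is concave. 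Concavity at the nodes $0$, $d-i$, $d$ gives $M_i(\Q)\ge M_i(\B^d)\bigl(\vol_d(\Q)/\kappa_d\bigr)^{i/d}$ with equality only for balls, while concavity at the nodes $1$, $d-i$, $d$ gives the auxiliary estimate $M_{d-1}(\Q)\le \kappa_{d-1}\,\kappa_i^{-(d-1)/i}\,M_i(\Q)^{(d-1)/i}$ (valid for $1\le i\le d-1$). The only genuinely new ingredient, compared with B\"or\"oczky's theorem, is a sharp volume inequality for \emph{finite $\rho$-separable} packings.

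For the upper bound, fix an infinite $\rho$-separable packing of $\C$ of density arbitrarily close to $\delta_{\rm sep}(\rho,\C)$, intersect it with a Euclidean ball whose radius $R_\ast$ is chosen so that exactly $n$ translates lie inside (adjusting by $O(1)$ translates if necessary), and let $\Q_0$ be the convex hull of this sub-packing. Then $\Q_0\subseteq R_\ast\B^d$, and counting shows $\kappa_d R_\ast^d=V_0\bigl(1+O(n^{-1/d})\bigr)$ with an explicit dimensional error; hence $M_i(\Q_0)\le M_i(\B^d)\bigl(V_0/\kappa_d\bigr)^{i/d}\bigl(1+\tfrac{\sigma_+}{n^{1/d}}\bigr)$ with an explicit $\sigma_+\le \frac{2.1\,\rho\,R(\C)\,i}{r(\C)\,\delta_{\rm sep}(\rho,\C)}$, and by minimality the same upper bound holds for $M_i(\Q)$.

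For the lower bound I would establish, extending the translative Groemer/Oler-type estimate used by B\"or\"oczky, that the convex hull $\Q$ of any $n$-element $\rho$-separable packing satisfies $\vol_d(\Q)\ge V_0-c(d)\,\rho\,R(\C)\,S(\Q)$ with leading constant $1$. Combining the auxiliary estimate with the upper bound on $M_i(\Q)$ gives $S(\Q)=\frac{d\kappa_d}{\kappa_{d-1}}M_{d-1}(\Q)=O\bigl(V_0^{(d-1)/d}\bigr)$, so the volume bound sharpens to $\vol_d(\Q)\ge V_0\bigl(1-\tfrac{|\sigma_-|}{n^{1/d}}\bigr)$ with an explicit $|\sigma_-|\le\frac{2.25\,\rho\,R(\C)\,d\,i}{r(\C)\,\delta_{\rm sep}(\rho,\C)}$. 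Feeding this back into $M_i(\Q)\ge M_i(\B^d)(\vol_d(\Q)/\kappa_d)^{i/d}$ and combining with the upper bound yields the displayed formula for $M_i(\Q)$ and the stated two-sided bound on $\sigma$, and also shows $M_i(\Q)\le(1+\varepsilon)\,M_i(\B^d)(\vol_d(\Q)/\kappa_d)^{i/d}$ with $\varepsilon=O\bigl(\tfrac{\rho R(\C)}{r(\C)}\,n^{-1/d}\bigr)$, i.e.\ $\Q$ nearly achieves equality in the Alexandrov inequality. A (necessarily weak) stability form of that inequality — if $M_i(\Q)\le(1+\varepsilon)M_i(\B^d)(\vol_d(\Q)/\kappa_d)^{i/d}$ then $1-\tfrac{r(\Q)}{R(\Q)}\le c(d)\,\varepsilon^{2/(d+3)}$ — then yields~\eqref{main} with $\omega=\lambda(d)\bigl(\tfrac{\rho R(\C)}{r(\C)}\bigr)^{2/(d+3)}$, the exponent $\tfrac{2}{d(d+3)}$ being $\tfrac1d$ times the stability exponent.

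The crux is the lower bound step. A naive periodization of $\Q$ does not suffice: to turn the finite packing into a $\rho$-separable infinite one by translating copies of the cluster along a lattice $\Lambda$ one would need the $\Lambda$-translates of $\Q$ to be essentially disjoint, forcing $\operatorname{covol}(\Lambda)\ge\vol_d(\Q-\Q)\ge 2^d\vol_d(\Q)$ and losing a factor $2^d$. Recovering the sharp leading constant therefore requires a finite-packing (parametric-density) refinement — comparing the minimizer directly with a near-extremal infinite $\rho$-separable arrangement — and this is the one step where the reduction from $\delta(\C)$ to $\delta_{\rm sep}(\rho,\C)$ is not merely formal: one must show that the supremal density of finite $\rho$-separable clusters, measured against the convex hull, is still asymptotically $\delta_{\rm sep}(\rho,\C)$, which is where the locality of the $\rho$-separability hypothesis is used. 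The remaining work is bookkeeping: propagating the dimensional constants through the three steps to reach the threshold $n\ge\frac{4^d d^{4d}}{\delta_{\rm sep}(\rho,\C)^{d-1}}\bigl(\rho R(\C)/r(\C)\bigr)^d$ and the numerical constants $2.25$ and $2.1$, and supplying (or quoting) a stability estimate for the Alexandrov inequality with a usable exponent such as $\tfrac{2}{d+3}$.
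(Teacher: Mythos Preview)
Your outline matches the paper's architecture---sandwich $M_i(\Q)$ between a ball construction and a volume lower bound, then invoke a stability form of an isoperimetric-type inequality---and you correctly flag the volume lower bound as the only place where $\rho$-separability is more than a formality. But that step is precisely the one you leave as an assertion, and it is the heart of the paper. Your diagnosis that naive periodization loses a factor $2^d$ is right, and your remark that one must ``compare the minimizer directly with a near-extremal infinite $\rho$-separable arrangement'' points in the right direction, yet no mechanism is given. The paper supplies one (Lemma~\ref{R-separable}): place disjoint copies of the finite cluster $C_n+2\rho\C$ on a packing lattice $\Lambda$, take a near-optimal infinite $\rho$-separable packing $C(\xi)+\C$, and for a random shift $\x\in\mathbf{P}$ form the hybrid packing obtained by keeping the lattice copies of $C_n$ together with those points of $C(\xi)$ that lie outside $\x+L_\xi+C_n+\inter(2\rho\C)$. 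Locality of the $\rho$-separability condition is exactly what guarantees this hybrid is again $\rho$-separable. Averaging over $\x$ gives $n\vol_d(\C)\le\delta_{\rm sep}(\rho,\C)\,\vol_d\bigl(\bigcup_i(c_i+2\rho\C)\bigr)$, which, via $\bigcup_i(c_i+2\rho\C)\subseteq\K+2\rho\C\subseteq\bigl(1+\tfrac{2\rho R(\C)}{r(\K)}\bigr)\K$ for any convex $\K\supseteq\{c_i\}$, yields the multiplicative volume bound the paper uses (and from which your additive form $\vol_d(\Q)\ge V_0-c(d)\rho R(\C)S(\Q)$ would also follow via Steiner's formula).

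Two smaller points. First, the paper does not bootstrap through a general $M_i$-versus-volume stability estimate as you propose; it uses Alexandrov--Fenchel to pass from the $M_i$ upper bound to an $S(\Q)$ upper bound (your ``auxiliary estimate''), combines this with the volume lower bound, and then applies the Groemer--Schneider stability estimate for the classical isoperimetric inequality $S$ versus $\vol_d$ to obtain the $r(\Q)/R(\Q)$ bound. Your route via $M_i$-stability would also work if you can source such an estimate with the same exponent, but the surface-area route is the one taken here and uses only the cited result (6) of \cite{GSch}. Second, your upper-bound construction (``intersect a near-optimal infinite packing with a ball of the right radius'') is essentially the paper's definition of $R_\C(\rho,n)$ together with Lemma~\ref{prop:Boroczky}, so that part is fine; the remaining work is indeed bookkeeping, but the crux above is not.
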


\begin{Remark}
It is worth restating Theorem~\ref{main-result} as follows: Consider the convex hull $\Q$ of $n$ non-overlapping translates of an arbitrary $\oo$-symmetric convex body $\C$ forming a $\rho$-separable packing in $\E^d$ with $n$ being sufficiently large. If $\Q$ has minimal mean $i$-dimensional projection for given $i$ with $1\leq i<d$, then $\Q$ is approximately a $d$-dimensional ball.
\end{Remark}



\begin{Remark}
The nature of the analogue question on minimizing $M_d(\Q)={\rm vol}_d(\Q)$ is very different. Namely, recall that Betke and Henk \cite{BH} proved L. Fejes T\'oth's sausage conjecture for $d\ge 42$ according to which the smallest volume of the convex hull of $n$ non-overlapping unit balls in $\E^d$ is obtained when the $n$ unit balls form a sausage, that is, a linear packing (see also \cite{BHW} and \cite{BHW-2}). As linear packings of unit balls are $\rho$-separable therefore the above theorem of Betke and Henk applies to $\rho$-separable packings of unit balls in $\E^d$ for all $\rho\geq 1$ and $d\geq 42$. On the other hand, the problem of minimizing the volume of the convex hull of $n$ unit balls forming a $\rho$-separable packing in $\E^d$ remains an interesting open problem for $\rho\geq 1$ and $2\leq d< 42$. Last but not least, the problem of minimizing $M_d(\Q)$ for $\oo$-symmetric convex bodies $\C$ different from a ball in $\E^d$ seems to be wide open for $\rho\geq 1$ and $d\geq 2$.
\end{Remark}

\begin{Remark}
Let $d \geq 2$, $1\leq i\leq d-1$, $n>1$, and let $\C$ be a given $\oo$-symmetric convex body in $\E^d$. Furthermore, let $\Q$ be the convex hull of the totally separable packing of $n$ translates of $\C$ in $\E^d$ such that $M_i(\Q)$ is minimal. Then it is natural to ask for the limit shape of $\Q$ as $n\to+\infty$, that is, to ask for an analogue of Theorem~\ref{main-result} within the family of totally separable translative packings of $\C$ in $\E^d$. This would require some new ideas besides the ones used in the following proof of Theorem~\ref{main-result}.
\end{Remark}

In the rest of the paper by adopting the method of B\"or\"oczky Jr. \cite{Bo} and making the necessary modifications, we give a proof of Theorem~\ref{main-result}.

\section{Basic properties of finite $\rho$-separable translative packings}

The following statement is the $\rho$-separable analogue of the Lemma in \cite{B02} (see also Theorem 3.1 in \cite{BHW}).

\begin{Lemma}\label{R-separable}
Let $\{\mathbf{c}_i+\mathbf{C}\ |\ 1\le i\le n\}$ be an arbitrary $\rho$-separable packing of $n$ translates of the $\oo$-symmetric convex body $\mathbf{C}$ in $\mathbb{E}^d$ with $\rho\ge 1$, $n\ge 1$, and $d\ge 2$. Then
$$\frac{n{\rm vol}_d(\mathbf{C}) }{{\rm vol}_d\left(\cup_{i=1}^n  \mathbf{c}_i+2\rho\mathbf{C}\right)}\le\delta_{{\rm sep}}(\rho, \mathbf{C})\ .$$ 
\end{Lemma}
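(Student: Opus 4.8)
The statement to establish reduces to the single inequality $q\le\delta_{{\rm sep}}(\rho,\C)$, where $q:=n\,{\rm vol}_d(\C)/{\rm vol}_d(D)$ and $D:=\bigcup_{i=1}^n(\mathbf{c}_i+2\rho\C)$. Following the method of the Lemma in \cite{B02}, I would argue by insertion: take a periodic $\rho$-separable packing of $\C$ whose density is nearly $\delta_{{\rm sep}}(\rho,\C)$, insert into it many well-separated translated copies of the given finite packing $\mathcal P_0:=\{\mathbf c_i+\C:1\le i\le n\}$ after deleting the elements that would conflict, check that the result is still $\rho$-separable, and note that if $q>\delta_{{\rm sep}}(\rho,\C)$ the density would have strictly increased past $\delta_{{\rm sep}}(\rho,\C)$ — which is absurd.

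The technical core is a local compatibility fact exploiting the ``$2\rho\C$-buffer''. Put $\beta:=\max\{2,\rho-1\}$. If $\mathbf q+\C$ is a translate of $\C$ whose centre $\mathbf q$ avoids $\bigcup_i(\mathbf c_i+\mathbf t+\beta\C)$ for a fixed vector $\mathbf t$, then (i) $\mathbf q+\C$ does not overlap any $\mathbf c_i+\C+\mathbf t$, (ii) $\mathbf q+\C$ is not contained in any $\rho$-ball $\mathbf c_i+\rho\C+\mathbf t$, and (iii) no $\mathbf c_i+\C+\mathbf t$ is contained in $\mathbf q+\rho\C$; each failure mode forces $\mathbf q\in\mathbf c_i+\mathbf t+(2\C\cup(\rho-1)\C)=\mathbf c_i+\mathbf t+\beta\C$. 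Moreover the $\rho$-ball of $\mathbf c_i+\C+\mathbf t$ lies inside $\mathbf c_i+2\rho\C+\mathbf t\subseteq D+\mathbf t$ (since $\rho<2\rho$, in fact in its interior), and two copies $\mathcal P_0+\mathbf t$, $\mathcal P_0+\mathbf t'$ are mutually non-overlapping with no cross $\rho$-ball containment whenever $\mathbf t-\mathbf t'\notin(\{\mathbf c_i\}-\{\mathbf c_i\})\oplus\beta\C$. Hence: if from a $\rho$-separable packing $\mathcal Q^\ast$ one deletes every element whose centre lies in $\bigcup_k\bigcup_i(\mathbf c_i+\mathbf t_k+\beta\C)$ and then adds the copies $\mathcal P_0+\mathbf t_1,\dots,\mathcal P_0+\mathbf t_m$ with the $\mathbf t_k-\mathbf t_l$ avoiding $(\{\mathbf c_i\}-\{\mathbf c_i\})\oplus\beta\C$, the result is again a $\rho$-separable packing — the $\rho$-ball of a surviving $\mathcal Q^\ast$-element contains only surviving $\mathcal Q^\ast$-elements (a subfamily of a totally separable subpacking of $\mathcal Q^\ast$), while the $\rho$-ball of an inserted element contains only elements of its own copy, where total separability is the hypothesis on $\mathcal P_0$.

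Now fix $\epsilon>0$ and choose a $\rho$-separable packing that is periodic modulo a lattice $\Lambda^\ast$ of large covolume and has density $\delta^\ast\ge\delta_{{\rm sep}}(\rho,\C)-\epsilon$: starting from a $\rho$-separable packing with near-maximal upper density, restrict to a huge cube where the density is nearly maximal, discard the elements within distance $(\rho+1)R(\C)$ of its boundary (this decouples the $\rho$-balls across the seams and loses only a vanishing fraction), and tile periodically. Choose $\mathbf t_1,\dots,\mathbf t_m$ in $\E^d/\Lambda^\ast$ greedily with all differences avoiding $(\{\mathbf c_i\}-\{\mathbf c_i\})\oplus2\beta\C$; greediness gives $m\ge\det\Lambda^\ast/{\rm vol}_d\!\big((\{\mathbf c_i\}-\{\mathbf c_i\})\oplus2\beta\C\big)$ and makes the deletion regions $\bigcup_i(\mathbf c_i+\mathbf t_k+\beta\C)$ pairwise disjoint. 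Perform the deletion–insertion of the previous paragraph periodically. Per fundamental cell the insertion adds volume $m\,n\,{\rm vol}_d(\C)$, while (choosing the global translate of $\mathcal Q^\ast$ to realize at most the average centre-count, and using $\beta\le2\rho$, hence ${\rm vol}_d(\{\mathbf c_i\}\oplus\beta\C)\le{\rm vol}_d(D)$) the deletion removes volume at most $\delta^\ast\,m\,{\rm vol}_d(D)$. Therefore the new $\rho$-separable packing has density at least
\[
\delta^\ast+\frac{m\big(n\,{\rm vol}_d(\C)-\delta^\ast\,{\rm vol}_d(D)\big)}{\det\Lambda^\ast}\ \ge\ \delta^\ast+\frac{{\rm vol}_d(D)\,(q-\delta^\ast)}{{\rm vol}_d\big((\{\mathbf c_i\}-\{\mathbf c_i\})\oplus2\beta\C\big)}\qquad(\text{if }q\ge\delta^\ast),
\]
the crucial point being that $\det\Lambda^\ast$ cancels. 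Since this density cannot exceed $\delta_{{\rm sep}}(\rho,\C)$, letting $\epsilon\to0$ so that $\delta^\ast\to\delta_{{\rm sep}}(\rho,\C)$ forces $q\le\delta_{{\rm sep}}(\rho,\C)$.

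I expect the real work to be in the two ingredients that support paragraph three: first, that the supremum defining $\delta_{{\rm sep}}(\rho,\C)$ is genuinely approached by \emph{periodic} $\rho$-separable packings, i.e. that trimming a boundary layer and tiling periodically preserves $\rho$-separability (the removed layer must be thick enough, of order $\rho R(\C)$, that no $\rho$-ball straddles two translated copies); and second, the averaging step bounding the deleted volume by $\delta^\ast m\,{\rm vol}_d(D)$, which relies on the deletion regions being disjoint — arranged by the factor $2\beta$ in the greedy condition. These are exactly where the homogeneity of $\delta_{{\rm sep}}(\rho,\C)$ and the precise size of the buffer $\beta=\max\{2,\rho-1\}$ enter, as in the proof of the Lemma of \cite{B02} that the present statement extends.
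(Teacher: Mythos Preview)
Your proof is correct and follows the same insertion strategy as the paper (adapted from \cite{B02} and \cite{BHW}), but the technical implementations differ in ways worth noting. The paper does \emph{not} periodize the near-optimal background packing: it simply takes a $\rho$-separable packing $C(\xi)+\C$ in a large cube $\W_\xi^d$ with density close to $\delta_{\rm sep}(\rho,\C)$, places lattice-translated copies of the finite configuration $C_n=\{\cc_1,\dots,\cc_n\}$ using a packing lattice $\Lambda$ of $C_n+2\rho\C$ (so that distinct inserted copies are automatically decoupled), deletes from $C(\xi)$ every center lying in $\x+L_\xi+C_n+{\rm int}(2\rho\C)$, and then averages over the single translation parameter $\x$ ranging over a fundamental parallelotope $\mathbf P$ of $\Lambda$. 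This averaging step replaces both your boundary-trimming periodicity construction and your greedy selection of the $\ttt_k$ in one stroke, and the buffer $2\rho\C$ (rather than your sharper $\beta\C$ with $\beta=\max\{2,\rho-1\}$) makes the $\rho$-separability of the hybrid packing immediate: any surviving center is at $\C$-distance at least $2\rho$ from every inserted center, so no $\rho$-ball can cross between the two populations. Your route works, but the paper's averaging over $\mathbf P$ sidesteps the need to manufacture periodic $\rho$-separable packings of near-maximal density --- precisely the ingredient you correctly flagged as the real work in your approach --- at the cost of a slightly coarser buffer that is irrelevant to the conclusion.
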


\begin{proof} We use the method of the proof of the Lemma in \cite{B02} (resp., Theorem 3.1 in \cite{BHW}) with proper modifications. The details are as follows. 
Assume that the claim is not true. Then there is an $\epsilon>0$ such that
\begin{equation}\label{R1}
{\rm vol}_d\left(\cup_{i=1}^n  \mathbf{c}_i+2\rho\mathbf{C}\right)=\frac{n{\rm vol}_d(\mathbf{C}) }{\delta_{{\rm sep}}(\rho, \mathbf{C})}-\epsilon
\end{equation}
Let $C_n=\{\mathbf{c}_i\ |\ i=1,\dots ,n\}$ and let $\Lambda$ be a packing lattice of $C_n+2\rho\mathbf{C}=\cup_{i=1}^n  \mathbf{c}_i+2\rho\mathbf{C}$ such that $C_n+2\rho\mathbf{C}$ is contained in a fundamental parallelotope of $\Lambda$ say, in $\mathbf{P}$, which is symmetric about the origin. Recall that for each $\lambda>0$, $\mathbf{W}_{\lambda}^d$ denotes the $d$-dimensional cube of edge length $2\lambda$ centered at the origin $\mathbf{o}$ in $\mathbb{E}^d$ having edges parallel to the coordinate axes of $\mathbb{E}^d$. Clearly, there is a constant $\mu>0$ depending on $\mathbf{P}$ only, such that for each $\lambda>0$ there is a subset $L_{\lambda}$ of $\Lambda$ with 
\begin{equation}\label{R2}
\mathbf{W}_{\lambda}^d\subseteq L_{\lambda}+\mathbf{P}\ {\rm and}\ L_{\lambda}+2\mathbf{P}\subseteq\mathbf{W}_{\lambda + \mu}^d\ .
\end{equation}
The definition of $\delta_{{\rm sep}}(\rho, \mathbf{C})$ implies that for each $\lambda>0$ there exists a $\rho$-separable packing of $m(\lambda)$ translates of $\mathbf{C}$ in 
$\mathbb{E}^d$ with centers at the points of $C(\lambda)$ such that
$$C(\lambda)+\mathbf{C}\subset \mathbf{W}_{\lambda}^d$$
and
$$\lim_{\lambda\to+\infty}\frac{m(\lambda){\rm vol}_d(\mathbf{C})}{{\rm vol}_d( \mathbf{W}_{\lambda}^d)}=\delta_{{\rm sep}}(\rho, \mathbf{C})\ .$$
As $\lim_{\lambda\to+\infty}\frac{{\rm vol}_d(\mathbf{W}_{\lambda+\mu}^d)}{{\rm vol}_d(\mathbf{W}_{\lambda}^d)}=1$ therefore there exist $\xi>0$ and a $\rho$-separable packing
of $m(\xi)$ translates of $\mathbf{C}$ in $\mathbb{E}^d$ with centers at the points of $C(\xi)$ and with $C(\xi)+\mathbf{C}\subset \mathbf{W}_{\xi}^d$ such that
\begin{equation}\label{R3}
\frac{{\rm vol}_d(\mathbf{P})\delta_{{\rm sep}}(\rho, \mathbf{C})}{{\rm vol}_d(\mathbf{P})+\epsilon}<\frac{m(\xi){\rm vol}_d(\mathbf{C})}{{\rm vol}_d( \mathbf{W}_{\xi+\mu}^d)}
\ {\rm and}\    \frac{n{\rm vol}_d(\mathbf{C})}{{\rm vol}_d(\mathbf{P})+\epsilon}<\frac{n{\rm vol}_d(\mathbf{C}){\rm card}(L_{\xi})}{{\rm vol}_d(\mathbf{W}_{\xi+\mu}^d)}\ ,
\end{equation}
where ${\rm card}(\cdot)$ refers to the cardinality of the given set.
Now, for each $\mathbf{x}\in\mathbf{P}$ we define a $\rho$-separable packing of $\overline{m}(\mathbf{x})$ translates of $\mathbf{C}$ in $\mathbb{E}^d$ with centers at the points of
$$\overline{C}(\mathbf{x}):=\{\mathbf{x}+L_{\xi}+C_n\}\cup\{\mathbf{y}\in C(\xi)\ |\ \mathbf{y}\notin \mathbf{x}+L_{\xi}+C_n+{\rm int}(2\rho\mathbf{C})\}\ .  $$
Clearly, (\ref{R2}) implies that $\overline{C}(\mathbf{x})+\mathbf{C}\subset\mathbf{W}_{\xi+\mu}^d$.
Now, in order to evaluate $\int_{\mathbf{x}\in\mathbf{P}}\overline{m}(\mathbf{x})d\mathbf{x}$, we introduce the function $\chi_{\mathbf{y}}$ for each $\mathbf{y}\in C(\xi)$ defined as follows: $\chi_{\mathbf{y}}(\mathbf{x})=1$ if $\mathbf{y}\notin \mathbf{x}+L_{\xi}+C_n+{\rm int}(2\rho\mathbf{C})$ and $\chi_{\mathbf{y}}(\mathbf{x})=0$ for any other $\mathbf{x}\in\mathbf{P}$. Based on the origin symmetric $\mathbf{P}$ it is easy to see that for any  $\mathbf{y}\in C(\xi)$ one has $\int_{\mathbf{x}\in\mathbf{P}}
\chi_{\mathbf{y}}(\mathbf{x})d\mathbf{x}={\rm vol}_d(\mathbf{P})-{\rm vol}_d(C_n+2\rho\mathbf{C})$. Thus, it follows in a straightforward way that

$$\int_{\mathbf{x}\in\mathbf{P}}\overline{m}(\mathbf{x})d\mathbf{x}=\int_{\mathbf{x}\in\mathbf{P}}\big(n{\rm card}(L_{\xi})+\sum_{\mathbf{y}\in C(\xi)}\chi_{\mathbf{y}}(\mathbf{x})\big)d\mathbf{x}=n{\rm vol}_d(\mathbf{P}){\rm card}(L_{\xi})+m(\xi)\big({\rm vol}_d(\mathbf{P})-{\rm vol}_d(C_n+2\rho\mathbf{C})\big)\ .$$ 
Hence, there is a point $\mathbf{p}\in\mathbf{P}$ with
$$\overline{m}(\mathbf{p})\ge m(\xi)\left(1-\frac{{\rm vol}_d(C_n+2\rho\mathbf{C})}{{\rm vol}_d(\mathbf{P})}\right)+n{\rm card}(L_{\xi})$$
and so
\begin{equation}\label{R4}
\frac{\overline{m}(\mathbf{p}){\rm vol}_d(\mathbf{C})}{{\rm vol}_d(\mathbf{W}_{\xi+\mu}^d)}\ge \frac{m(\xi){\rm vol}_d(\mathbf{C})}{{\rm vol}_d(\mathbf{W}_{\xi+\mu}^d)}\left(1-\frac{{\rm vol}_d(C_n+2\rho\mathbf{C})}{{\rm vol}_d(\mathbf{P})}\right)+\frac{n{\rm vol}_d(\mathbf{C}){\rm card}(L_{\xi})}{{\rm vol}_d(\mathbf{W}_{\xi+\mu}^d)}\ .
\end{equation}
Now, (\ref{R1}) implies in a straightforward way that
\begin{equation}\label{R5}
\frac{{\rm vol}_d(\mathbf{P}) \delta_{{\rm sep}}(\rho, \mathbf{C}) }{{\rm vol}_d(\mathbf{P})+\epsilon}\left(1-\frac{{\rm vol}_d(C_n+2\rho\mathbf{C})}{{\rm vol}_d(\mathbf{P})}\right)+\frac{n{\rm vol}_d(\mathbf{C})}{{\rm vol}_d(\mathbf{P})+\epsilon}= \delta_{{\rm sep}}(\rho, \mathbf{C})
\end{equation}
Thus, (\ref{R3}), (\ref{R4}), and (\ref{R5}) yield that
$$\frac{\overline{m}(\mathbf{p}){\rm vol}_d(\mathbf{C})}{{\rm vol}_d(\mathbf{W}_{\xi+\mu}^d)}>\delta_{{\rm sep}}(\rho, \mathbf{C})\ .$$
As $\overline{C}(\mathbf{p})+\mathbf{C}\subset\mathbf{W}_{\xi+\mu}^d$ this contradicts the definition of $\delta_{{\rm sep}}(\rho, \mathbf{C})$, finishing the proof of Lemma~\ref{R-separable}.
\end{proof}

\begin{Definition}
Let $d\geq 2$, $\rho\geq 1$, and let $\K$ (resp., $\C$) be a convex body (resp., an $\oo$-symmetric convex body) in $\E^d$. Then let $\nu_{\C}(\rho ,\K)$ denote the largest $n$ with the property that there exists a $\rho$-separable packing $\{\mathbf{c}_i+\mathbf{C}\ |\ 1\le i\le n\}$ such that $\{\mathbf{c}_i\ |\ 1\le i\le n\}\subset\K$. 
\end{Definition}

\begin{Lemma}\label{prop:Boroczky}
Let $d\geq 2$, $\rho\geq 1$, and let $\K$ (resp., $\C$) be a convex body (resp., an $\oo$-symmetric convex body) in $\E^d$. Then
\[
\left( 1+ \frac{2 \rho R(\C)}{r(\K)} \right)^{-d} \frac{{\rm vol}_d(\C) \nu_{\C}(\rho ,\K)}{\delta_{\rm sep}(\rho ,\C)} \leq {\rm vol}_d(\K) \leq \frac{{\rm vol}_d(\C) \nu_{\C}(\rho ,\K)}{\delta_{\rm sep}(\rho ,\C)}.
\]
\end{Lemma}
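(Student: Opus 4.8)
The plan is to establish the two inequalities separately, each by a volume‑comparison argument between $\K$ and a suitably inflated or deflated copy of $\K$, using Lemma~\ref{R-separable} on one side and the definition of $\nu_{\C}(\rho,\K)$ on the other.

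For the upper bound on ${\rm vol}_d(\K)$, I would start from an optimal $\rho$‑separable packing $\{\cc_i+\C\mid 1\le i\le n\}$ with all centers $\cc_i\in\K$ and $n=\nu_{\C}(\rho,\K)$. Lemma~\ref{R-separable} gives $n\,{\rm vol}_d(\C)\le\delta_{\rm sep}(\rho,\C)\cdot{\rm vol}_d\bigl(\bigcup_{i=1}^n\cc_i+2\rho\C\bigr)$. The issue is that the right‑hand set need not be contained in $\K$; however, since each $\cc_i\in\K$ and $2\rho\C\subseteq 2\rho R(\C)\B^d$, we have $\bigcup_i(\cc_i+2\rho\C)\subseteq\K+2\rho R(\C)\B^d$. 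A cleaner route, and the one I expect to match the stated constant, is to observe that $\bigcup_i(\cc_i+2\rho\C)$ need not even be what we compare to; instead, note the centers lie in $\K$, so the translates $\cc_i+\C$ themselves lie in $\K+\C\subseteq\K+R(\C)\B^d$, but this still inflates $\K$. The sharp statement ${\rm vol}_d(\K)\le{\rm vol}_d(\C)\nu_{\C}(\rho,\K)/\delta_{\rm sep}(\rho,\C)$ should instead follow by filling $\K$ efficiently: take a nearly optimal $\rho$‑separable packing in a large box, intersect with $\K$, and count how many translates have centers inside $\K$; letting the packing density approach $\delta_{\rm sep}(\rho,\C)$ and rescaling shows $\nu_{\C}(\rho,\K)\ge\delta_{\rm sep}(\rho,\C){\rm vol}_d(\K)/{\rm vol}_d(\C)$ up to lower‑order terms — and since $\nu_\C(\rho,\K)$ is defined as the exact maximum over all scales (no limit is taken), a direct covering/averaging argument as in the proof of Lemma~\ref{R-separable} gives the clean inequality. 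The key point is that averaging a dense periodic $\rho$‑separable packing over translations places, on average, at least $\delta_{\rm sep}(\rho,\C){\rm vol}_d(\K)/{\rm vol}_d(\C)$ centers in $\K$, so some translate achieves this, yielding the upper bound on ${\rm vol}_d(\K)$.

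For the lower bound, I would use $n=\nu_{\C}(\rho,\K)$ again with centers $\cc_i\in\K$, and apply Lemma~\ref{R-separable}: $n\,{\rm vol}_d(\C)\le\delta_{\rm sep}(\rho,\C)\,{\rm vol}_d\bigl(\bigcup_i\cc_i+2\rho\C\bigr)\le\delta_{\rm sep}(\rho,\C)\,{\rm vol}_d\bigl(\K+2\rho R(\C)\B^d\bigr)$. Now I bound ${\rm vol}_d(\K+2\rho R(\C)\B^d)$ from above by $\bigl(1+\tfrac{2\rho R(\C)}{r(\K)}\bigr)^d{\rm vol}_d(\K)$: this is the standard fact that if $r(\K)\B^d+\mathbf{z}\subseteq\K$ for some center $\mathbf{z}$, then $\K+tR(\C)\B^d\subseteq\K+\tfrac{tR(\C)}{r(\K)}(\K-\mathbf{z})=\bigl(1+\tfrac{tR(\C)}{r(\K)}\bigr)\K-\tfrac{tR(\C)}{r(\K)}\mathbf{z}$, which is a homothet of $\K$ with ratio $1+\tfrac{tR(\C)}{r(\K)}$. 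Taking $t=2\rho$ and combining with the previous display yields
\[
\nu_{\C}(\rho,\K)\,{\rm vol}_d(\C)\le\delta_{\rm sep}(\rho,\C)\Bigl(1+\tfrac{2\rho R(\C)}{r(\K)}\Bigr)^d{\rm vol}_d(\K),
\]
which rearranges to the claimed lower bound on ${\rm vol}_d(\K)$.

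The main obstacle is the upper bound ${\rm vol}_d(\K)\le{\rm vol}_d(\C)\nu_{\C}(\rho,\K)/\delta_{\rm sep}(\rho,\C)$, since one must produce a $\rho$‑separable packing with \emph{many} centers exactly inside $\K$ (not merely inside an inflated $\K$) while controlling that the density used is genuinely close to $\delta_{\rm sep}(\rho,\C)$. I expect this to go through by the same translation‑averaging device used in the proof of Lemma~\ref{R-separable}: embed $\K$ in a large box $\mathbf{W}_\lambda^d$, take a $\rho$‑separable packing in the box with density approaching $\delta_{\rm sep}(\rho,\C)$, average the count of centers landing in $\K$ over all translates of the packing modulo a packing lattice of the box, extract a good translate, and let $\lambda\to\infty$ — the boundary effects of $\K$ are lower order relative to ${\rm vol}_d(\K)$, and since $\nu_\C(\rho,\K)$ is a supremum over all admissible packings, passing to the limit gives exactly $\nu_\C(\rho,\K)\ge\delta_{\rm sep}(\rho,\C){\rm vol}_d(\K)/{\rm vol}_d(\C)$. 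Everything else is routine homothety estimates.
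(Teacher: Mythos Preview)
Your proposal is correct and follows essentially the same approach as the paper. For the lower bound you do exactly what the paper does: apply Lemma~\ref{R-separable} to an optimal packing with centers in $\K$, use $\bigcup_i(\cc_i+2\rho\C)\subseteq\K+2\rho\C$, and then the homothety $\K+2\rho\C\subseteq\bigl(1+\tfrac{2\rho R(\C)}{r(\K)}\bigr)\K$. For the upper bound your final plan---take a near-optimal $\rho$-separable packing in a large cube and average to find a translate of $\K$ containing many centers---is the paper's argument as well; the only difference is packaging. The paper isolates the averaging step as a clean pigeonhole Sublemma, namely $\nu_{\C}(\rho,\mathbf{Y})\geq \vol_d(\mathbf{Y})\,\nu_{\C}(\rho,\mathbf{X})/\vol_d(\mathbf{X}-\mathbf{Y})$, proved by observing that the maximum covering multiplicity of $\mathbf{X}-\mathbf{Y}$ by the sets $\x_i-\mathbf{Y}$ equals the maximum of ${\rm card}((\z+\mathbf{Y})\cap X)$; this avoids the lattice-periodization you mention and gives the inequality directly, after which one applies it with $\mathbf{X}=\mathbf{W}_\lambda^d$, $\mathbf{Y}=-\K$ and lets $\lambda\to\infty$, $\varepsilon\to 0$. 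Your route via periodic averaging also works but is slightly heavier than needed.
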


\begin{proof}

Observe that Lemma~\ref{R-separable} and the containments $\K+2 \rho \C \subseteq \left( 1 + \frac{2 \rho R(\C)}{r(\K)} \right) \K$ yield the lower bound immediately.

We prove the upper bound.
Let $0 < \varepsilon < \delta_{\rm sep}(\rho ,\C)$. By the definition of $\delta_{\rm sep}(\rho ,\C)$, if $\lambda$ is sufficiently large, then
there is a $\rho$-separable packing $\{\mathbf{c}_i+\mathbf{C}\ |\ 1\le i\le n\}$ such that $C_n:=\{\mathbf{c}_i\ |\ 1\le i\le n\}\subset\mathbf{W}_{\lambda}^d$ and 
\begin{equation}\label{direct-start}
\frac{n{\rm vol}_d(\C)}{{\rm vol}_d(\mathbf{W}_{\lambda}^d)} \geq  \delta_{\rm sep}(\rho ,\C) - \varepsilon .
\end{equation}

\begin{Sublemma}
If $\mathbf{X}$ and $\mathbf{Y}$ are convex bodies in $\E^d$ and $\C$ is an $\oo$-symmetric convex body in $\E^d$, then
\begin{equation}\label{almost-trivial}
\nu_{\C}(\rho ,\mathbf{Y}) \geq \frac{{\rm vol}_d(\mathbf{Y}) \nu_{\C}(\rho ,\mathbf{X})}{{\rm vol}_d(\mathbf{X}-\mathbf{Y})}.
\end{equation} 
\end{Sublemma}

\begin{proof}
Indeed, consider any finite point set $X:=\{\x_1,\dots ,\x_N\} \subset \mathbf{X}$. Observe that the following are equivalent for a positive integer $k$:
\begin{itemize}
\item $k$ is the maximum number a point of $\mathbf{X}-\mathbf{Y}$ is covered by the sets $\x_i -\mathbf{Y}$, $\x_i \in X$,
\item $k$ is the maximum number such that ${\rm card}((\z + \mathbf{Y}) \cap X)=k$ for some point $\z\in \mathbf{X}-\mathbf{Y}$.
\end{itemize}

Thus, $N\vol_d(\mathbf{Y}) \leq {\rm card}( (\z + \mathbf{Y}) \cap X) \vol_d(\mathbf{X}-\mathbf{Y})$ for some $\z \in \mathbf{X}-\mathbf{Y}$. Hence, if  $\{\mathbf{x}_i+\mathbf{C}\ |\ 1\le i\le N\}$ is an arbitrary $\rho$-separable packing with $X=\{\x_1,\dots ,\x_N\} \subset \mathbf{X}$, then
\[
\nu_{\C}(\rho ,\mathbf{Y}) \geq {\rm card}((\z + \mathbf{Y}) \cap X) \geq \frac{\vol_d(\mathbf{Y}) N}{\vol_d(\mathbf{X}-\mathbf{Y})},
\]
which implies (\ref{almost-trivial}).
\end{proof}

Applying (\ref{almost-trivial}) to $\mathbf{X}=\mathbf{W}_{\lambda}^d$ and $\mathbf{Y}=-\K$ and using (\ref{direct-start}), we obtain
\[
\nu_{\C}(\rho ,\K) \geq \frac{n\vol_d(\K)}{\vol_d(\mathbf{W}_{\lambda}^d + \K)} 
\geq \frac{\vol_d(\K)}{\vol_d(\mathbf{W}_{\lambda + R(\K)}^d )} \cdot \frac{\vol_d(\mathbf{W}_{\lambda}^d) ( \delta_{\rm sep}(\rho ,\C) - \varepsilon)}{\vol_d(\C)},
\]
which finishes the proof of Lemma~\ref{prop:Boroczky}. 
\end{proof}

\begin{Definition}
Let $d\geq 2$, $n\geq 1$, $\rho\geq 1$, and let $\C$ be an $\oo$-symmetric convex body in $\E^d$. Then let $R_{\C}(\rho , n)$ be the smallest radius $R>0$ with the property that $\nu_{\C}(\rho ,R\B^d)\geq n$.
\end{Definition}

Clearly, for any $\varepsilon > 0$ we have $\nu_{\C}(\rho , (R_{\C}(\rho ,n)-\varepsilon)\B^d) < n$, and thus, by Lemma~\ref{prop:Boroczky} (for $\K=R_{\C}(\rho ,n)\B^d$), we obtain

\begin{Corollary}
Let $d\geq 2$, $n\geq 1$, $\rho\geq 1$, and let $\C$ be an $\oo$-symmetric convex body in $\E^d$. Then
\begin{equation}\label{eq:2}
R_{\C}(\rho ,n)^d \leq \frac{{\rm vol}_d(\C) n}{\delta_{\rm sep}(\rho ,\C) \kappa_d} \leq \left( R_{\C}(\rho ,n) + 2 \rho R(\C) \right)^d . 
\end{equation}
\end{Corollary}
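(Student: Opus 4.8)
The plan is to obtain both inequalities of \eqref{eq:2} as immediate consequences of the two-sided estimate of Lemma~\ref{prop:Boroczky}, applied to Euclidean balls of radius slightly above and slightly below $R_{\C}(\rho ,n)$. Write $R:=R_{\C}(\rho ,n)$, and observe that for a ball $\K=R'\B^d$ one has $r(\K)=R(\K)=R'$ and ${\rm vol}_d(\K)=\kappa_d (R')^d$, so Lemma~\ref{prop:Boroczky} specializes to
\[
\kappa_d (R')^d \le \frac{{\rm vol}_d(\C)\,\nu_{\C}(\rho ,R'\B^d)}{\delta_{\rm sep}(\rho ,\C)} \le \kappa_d\bigl(R'+2\rho R(\C)\bigr)^d .
\]

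For the right-hand inequality of \eqref{eq:2}, I would use that, by the definition of $R_{\C}(\rho ,n)$ as the smallest radius with $\nu_{\C}(\rho ,R'\B^d)\ge n$, we have $\nu_{\C}(\rho ,(R+\varepsilon)\B^d)\ge n$ for every $\varepsilon>0$. Combining this with the left inequality of the displayed specialization at $R'=R+\varepsilon$ gives
\[
\frac{{\rm vol}_d(\C)\,n}{\delta_{\rm sep}(\rho ,\C)} \le \frac{{\rm vol}_d(\C)\,\nu_{\C}(\rho ,(R+\varepsilon)\B^d)}{\delta_{\rm sep}(\rho ,\C)} \le \kappa_d\bigl(R+\varepsilon+2\rho R(\C)\bigr)^d ,
\]
and dividing by $\kappa_d$ and letting $\varepsilon\to 0^+$ yields $\frac{{\rm vol}_d(\C)\,n}{\delta_{\rm sep}(\rho ,\C)\kappa_d}\le\bigl(R+2\rho R(\C)\bigr)^d$. (Alternatively, since the infimum in the definition of $R_{\C}(\rho ,n)$ is attained — a standard compactness argument, both the non-overlap condition $\|\cc_j-\cc_k\|_{\C}\ge 2$ and total separability being closed under limits of center configurations — one may apply Lemma~\ref{prop:Boroczky} directly with $\K=R\B^d$, as the statement indicates.)

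For the left-hand inequality of \eqref{eq:2}, I would use the complementary half of the definition of $R$: for every $\varepsilon\in(0,R)$ the radius $R-\varepsilon$ fails the property, i.e.\ $\nu_{\C}(\rho ,(R-\varepsilon)\B^d)<n$. The right inequality of the displayed specialization at $R'=R-\varepsilon$ then gives $\kappa_d(R-\varepsilon)^d\le\frac{{\rm vol}_d(\C)\,\nu_{\C}(\rho ,(R-\varepsilon)\B^d)}{\delta_{\rm sep}(\rho ,\C)}<\frac{{\rm vol}_d(\C)\,n}{\delta_{\rm sep}(\rho ,\C)}$; dividing by $\kappa_d$ and letting $\varepsilon\to 0^+$ gives $R^d\le\frac{{\rm vol}_d(\C)\,n}{\delta_{\rm sep}(\rho ,\C)\kappa_d}$, which together with the previous step completes \eqref{eq:2}.

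I do not expect a genuine obstacle here: once Lemma~\ref{prop:Boroczky} is available, the argument is essentially bookkeeping. The only two points deserving attention are pairing the \emph{correct} side of the two-sided bound in Lemma~\ref{prop:Boroczky} with the relevant inequality for $\nu_{\C}$ (pairing it with the wrong side produces a vacuous estimate), and — if one prefers to invoke the lemma with $\K=R\B^d$ rather than passing to the limit through $(R\pm\varepsilon)\B^d$ — checking the attainment of the infimum defining $R_{\C}(\rho ,n)$ via the usual selection/compactness argument for translative packings.
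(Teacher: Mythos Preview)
Your proposal is correct and follows essentially the same route as the paper: specialize Lemma~\ref{prop:Boroczky} to balls of radius close to $R_{\C}(\rho,n)$ and combine with the defining property of $R_{\C}(\rho,n)$, exactly as the paper does in the sentence preceding the Corollary. One cosmetic slip: in your narrative you refer to the ``left'' inequality of your displayed specialization when you in fact use the right one (and vice versa for the second step); the written inequality chains themselves are correct, so just swap the two labels.
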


\begin{Lemma}\label{lem:Boroczky}
Let $n \geq \frac{4^d \delta_{\rm sep}(\rho ,\C) \rho^d R(\C)^d}{r(\C)^d}$ and $i=1,2,\ldots,d-1$. Then for $R=R_{\C}(\rho ,n)$,
\[
M_i((R+ \rho R(\C))\B^d) \leq M_i(\B^d) \left( \frac{{\rm vol}_d(\C) n}{\delta_{\rm sep}(\rho ,\C) \kappa_d} \right)^{\frac{i}{d}} \left( 1 + \frac{2\delta_{\rm sep}(\rho ,\C)^{\frac{1}{d}} \rho R(\C)}{r(\C)} \cdot \frac{1}{n^{\frac{1}{d}}} \right)^i .
\]
\end{Lemma}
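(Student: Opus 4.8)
The plan is to derive this estimate directly from the Corollary (inequality~\eqref{eq:2}) by straightforward monotonicity and binomial manipulations. Since $M_i$ is monotone under inclusion and homogeneous of degree $i$, we have $M_i((R+\rho R(\C))\B^d) = (R+\rho R(\C))^i M_i(\B^d)$, so the task reduces to bounding $(R+\rho R(\C))^i$ from above in terms of $\left(\frac{{\rm vol}_d(\C) n}{\delta_{\rm sep}(\rho,\C)\kappa_d}\right)^{i/d}$ times a small multiplicative error. Writing $A := \left(\frac{{\rm vol}_d(\C) n}{\delta_{\rm sep}(\rho,\C)\kappa_d}\right)^{1/d}$, the left inequality in~\eqref{eq:2} says exactly $R \leq A$, hence $R + \rho R(\C) \leq A\left(1 + \frac{\rho R(\C)}{A}\right)$, and raising to the $i$-th power gives the desired form provided $\frac{\rho R(\C)}{A} \leq \frac{2\delta_{\rm sep}(\rho,\C)^{1/d}\rho R(\C)}{r(\C) n^{1/d}}$.

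So the key step is to check that last inequality, i.e. that $A \geq \frac{r(\C) n^{1/d}}{2\delta_{\rm sep}(\rho,\C)^{1/d}}$, equivalently $A^d \geq \frac{r(\C)^d n}{2^d \delta_{\rm sep}(\rho,\C)}$, i.e. $\frac{{\rm vol}_d(\C)}{\kappa_d} \geq \frac{r(\C)^d}{2^d}$. This is immediate: since $r(\C)\B^d \subseteq \C$ (after translating $\C$ so its inball is centered at $\oo$; central symmetry of $\C$ actually places the inball at $\oo$ automatically), we get ${\rm vol}_d(\C) \geq r(\C)^d \kappa_d \geq \frac{r(\C)^d\kappa_d}{2^d}$, which is far more than needed. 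Curiously, this shows the hypothesis $n \geq \frac{4^d \delta_{\rm sep}(\rho,\C)\rho^d R(\C)^d}{r(\C)^d}$ is not actually needed for the stated conclusion to hold with the constant $2$; I suspect the lemma as used downstream wants a slightly cleaner error term or the hypothesis is recorded here for uniformity with later lemmas, so I would simply carry it along and not rely on it, or alternatively use it to absorb $\rho R(\C)$ against $A$ more generously. Either way the argument is: invoke~\eqref{eq:2}, factor, bound the correction term using $r(\C)\B^d\subseteq\C$, and raise to the power $i$.

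There is essentially no obstacle here; the only mild care needed is bookkeeping with the exponents $i/d$ versus $i$ and making sure the inequality $(1+x)^i$ is applied in the correct direction (it is increasing in $x\geq 0$, so an upper bound on $x$ gives an upper bound on $(1+x)^i$). I would write it as three display lines: first the identity $M_i((R+\rho R(\C))\B^d)=(R+\rho R(\C))^i M_i(\B^d)$; then the chain $R+\rho R(\C)\leq A+\rho R(\C)=A\left(1+\frac{\rho R(\C)}{A}\right)\leq A\left(1+\frac{2\delta_{\rm sep}(\rho,\C)^{1/d}\rho R(\C)}{r(\C)n^{1/d}}\right)$, justified by~\eqref{eq:2} and ${\rm vol}_d(\C)\geq r(\C)^d\kappa_d$; and finally raise to the $i$-th power and substitute $A^i = M_i(\B^d)^{?}$—no, simply multiply through, yielding the claim. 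The whole proof is a handful of lines.

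\begin{proof}
Since $M_i$ is monotone with respect to containment and positively homogeneous of degree $i$, and $M_i(\B^d)=\kappa_i$, we have
\[
M_i\bigl((R+\rho R(\C))\B^d\bigr) = (R+\rho R(\C))^i\, M_i(\B^d).
\]
Set $A := \left( \frac{{\rm vol}_d(\C) n}{\delta_{\rm sep}(\rho ,\C) \kappa_d} \right)^{\frac{1}{d}}$. By the left-hand inequality of \eqref{eq:2}, $R \leq A$. Moreover, since $\C$ is $\oo$-symmetric, $r(\C)\B^d \subseteq \C$, so ${\rm vol}_d(\C) \geq r(\C)^d \kappa_d$, which gives
\[
A \geq \left( \frac{r(\C)^d n}{\delta_{\rm sep}(\rho ,\C)} \right)^{\frac{1}{d}} = \frac{r(\C) n^{\frac{1}{d}}}{\delta_{\rm sep}(\rho ,\C)^{\frac{1}{d}}},
\]
and hence $\frac{\rho R(\C)}{A} \leq \frac{\delta_{\rm sep}(\rho ,\C)^{\frac{1}{d}} \rho R(\C)}{r(\C) n^{\frac{1}{d}}} \leq \frac{2\delta_{\rm sep}(\rho ,\C)^{\frac{1}{d}} \rho R(\C)}{r(\C) n^{\frac{1}{d}}}$. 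Therefore
\[
R + \rho R(\C) \leq A + \rho R(\C) = A\left( 1 + \frac{\rho R(\C)}{A} \right) \leq A\left( 1 + \frac{2\delta_{\rm sep}(\rho ,\C)^{\frac{1}{d}} \rho R(\C)}{r(\C) n^{\frac{1}{d}}} \right).
\]
Raising to the $i$-th power (both sides being nonnegative, and $t\mapsto t^i$ increasing on $[0,\infty)$) and multiplying by $M_i(\B^d)$ yields
\[
M_i\bigl((R+\rho R(\C))\B^d\bigr) \leq M_i(\B^d)\, A^i \left( 1 + \frac{2\delta_{\rm sep}(\rho ,\C)^{\frac{1}{d}} \rho R(\C)}{r(\C)} \cdot \frac{1}{n^{\frac{1}{d}}} \right)^i,
\]
which is the asserted inequality after substituting $A^i = \left( \frac{{\rm vol}_d(\C) n}{\delta_{\rm sep}(\rho ,\C) \kappa_d} \right)^{\frac{i}{d}}$.
\end{proof}
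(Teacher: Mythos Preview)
Your proof is correct. Both you and the paper reduce the claim to bounding $R+\rho R(\C)$ by $A\bigl(1+\tfrac{2\rho R(\C)}{A}\bigr)$ with $A=\bigl(\tfrac{\vol_d(\C)n}{\delta_{\rm sep}(\rho,\C)\kappa_d}\bigr)^{1/d}$, and then pass to $r(\C)$ via $\vol_d(\C)\geq r(\C)^d\kappa_d$; the difference is how that factor of $2$ is obtained. The paper writes $R+\rho R(\C)=\bigl(1+\tfrac{\rho R(\C)}{R}\bigr)R\leq\bigl(1+\tfrac{\rho R(\C)}{R}\bigr)A$ and then bounds $\tfrac{\rho R(\C)}{R}$ using the \emph{right-hand} inequality of \eqref{eq:2} (which gives $R\geq A-2\rho R(\C)$) together with the hypothesis on $n$ (which forces $A\geq 4\rho R(\C)$, hence $R\geq A/2$). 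You instead use the cruder but tighter step $R+\rho R(\C)\leq A+\rho R(\C)=A\bigl(1+\tfrac{\rho R(\C)}{A}\bigr)$, which bypasses both the second half of \eqref{eq:2} and the lower bound on $n$; your observation that the hypothesis on $n$ is superfluous for this particular lemma (and that you in fact get constant $1$ rather than $2$) is correct.
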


\begin{proof}
Set $t=R + 2 \rho R(\C)$. Then the first inequality in (\ref{eq:2}) yields that
\[
R+\rho R(\C) \leq \frac{t-\rho R(\C)}{t-2 \rho R(\C)} \left( \frac{\vol_d(\C) n}{\delta_{\rm sep}(\rho ,\C) \kappa_d} \right)^{\frac{1}{d}}.
\]
Thus, by the second inequality in (\ref{eq:2}) and the condition that $n \geq \frac{4^d \delta_{\rm sep}(\rho ,\C) \rho^d R(\C)^d}{r(\C)^d} \geq \frac{4^d \delta_{\rm sep}(\rho ,\C) \rho^d R(\C)^d \kappa_d}{\vol_d(\C)}$, we obtain that
\[
\frac{t-\rho R(\C)}{t-2\rho R(\C)} = 1+ \left( \frac{t}{\rho R(\C)} - 2 \right)^{-1} \leq 1 + \frac{2 \delta_{\rm sep}(\rho ,\C)^{\frac{1}{d}} \rho R(\C) \kappa_d^{\frac{1}{d}}}{\vol_d(\C)^{\frac{1}{d}}} \cdot \frac{1}{n^{\frac{1}{d}}}\leq 1 + \frac{2 \delta_{\rm sep}(\rho ,\C)^{\frac{1}{d}} \rho R(\C)}{r(\C)} \cdot \frac{1}{n^{\frac{1}{d}}}.
\]
\end{proof}

\section{Proof of Theorem~\ref{main-result}}

In the proof that follows we are going to use the following special case of the Alexandrov-Fenchel inequality (\cite{Sch}): if $\K$ is a convex body in $\E^d$ satisfying $M_i(\K)\leq M_i(r\B^d)$ for given $1\leq i<d$ and $r>0$, then
\begin{equation}\label{A-F-inequality}
M_j(\K)\leq M_j(r\B^d) 
\end{equation}
holds for all $j$ with $i<j\leq d$. In particular, this statement for $j=d$ can be restated as follows: if $\K$ is a convex body in $\E^d$ satisfying $M_d(\K)= M_d(r\B^d)$ for given $d\geq 2$ and $r>0$, then $M_i(\K)\geq M_i(r\B^d)$ holds for all $i$ with $1\leq i<d$.

Let $d \geq 2$, $1\leq i\leq d-1$, $\rho\geq 1$, and let $\Q$ be the convex hull of the $\rho$-separable packing of $n$ translates of the $\oo$-symmetric convex body $\C$ in $\E^d$ such that $M_i(\Q)$ is minimal and
\begin{equation}\label{assumption}
n \geq \frac{4^d d^{4d}}{\delta_{{\rm sep}}(\rho,\C)^{d-1}} \cdot \left( \rho \frac{R(\C)}{r(\C)}\right)^d.
\end{equation}

By the minimality of $M_i(\Q)$ we have that
\begin{equation}\label{min}
M_i(\Q) \leq M_i(R \B^d+\C) \leq M_i((R+\rho R(\C))\B^d)
\end{equation}
with $R=R_{\C}(\rho ,n)$.
Note that (\ref{min}) and Lemma~\ref{lem:Boroczky} imply that
\[
M_i(\Q) \leq \left( 1 + \frac{2\delta_{\rm sep}(\rho ,\C)^{\frac{1}{d}} \rho R(\C)}{r(\C)} \cdot \frac{1}{n^{\frac{1}{d}}} \right)^{i} M_i(\B^d) \left( \frac{{\rm vol}_d(\C)}{\delta_{{\rm sep}}(\rho,\C) \kappa_d} \right)^{\frac{i}{d}} \cdot n^{\frac{i}{d}} .
\]
We examine the function $x \mapsto (1+x)^i$, where, by (\ref{assumption}), we have $x \leq x_0 = \frac{1}{2d^4}$. The convexity of this function implies that
$(1+x)^i \leq 1 + i (1+x_0)^{i-1} x$. Thus, from the inequality $\left( 1 + \frac{1}{2d^4} \right)^{d-1} \leq \frac{33}{32} < 1.05$, where $d \geq 2$, the upper bound for $M_i(\Q)$ in Theorem~\ref{main-result} follows.

On the other hand, in order to prove the lower bound for $M_i(\Q)$ in Theorem~\ref{main-result}, we start with the observation that (\ref{A-F-inequality}) (based on (\ref{min})), (\ref{assumption}), and Lemma~\ref{lem:Boroczky} yield that
\begin{equation}\label{eq:S(Q)}
S(\Q) \leq S((R+\rho R(\C))\B^d) \leq d \kappa_d \left( \frac{n\vol_d(\C) }{\delta_{\rm sep}(\rho ,\C) \kappa_d} \right)^{\frac{d-1}{d}} \left( 1 + \frac{2\delta_{\rm sep}(\rho ,\C)^{\frac{1}{d}} \rho R(\C)}{r(\C)} \cdot \frac{1}{n^{\frac{1}{d}}} \right)^{d-1} .
\end{equation}

Thus, (\ref{eq:S(Q)}) together with the inequalities $S(\Q) r(\Q)\geq \vol_d(\Q)$ (cf. \cite{Osserman}) and $\vol_d(\Q) \geq n\vol_d(\C) $ yield
\begin{equation}\label{inradius-1}
r(\Q) \geq \left( 1 + \frac{2\delta_{\rm sep}(\rho ,\C)^{\frac{1}{d}} \rho R(\C)}{r(\C)} \cdot \frac{1}{n^{\frac{1}{d}}} \right)^{-(d-1)} \frac{\vol_d(\C)^{\frac{1}{d}} \delta_{\rm sep}(\rho ,\C)^{\frac{d-1}{d}}}{d \kappa_d^{\frac{1}{d}}} \cdot n^{\frac{1}{d}}.
\end{equation}
Applying the assumption (\ref{assumption}) and $\vol_d(\C) \geq \kappa_d r(\C)^d$ to (\ref{inradius-1}), we get that
\begin{equation}\label{inradius-2}
r(\Q) \geq \left( 1 + \frac{1}{2 d^{4}} \right)^{-(d-1)}  \frac{\delta_{\rm sep}(\rho ,\C)^{\frac{d-1}{d}} r(\C)}{d} n^{\frac{1}{d}} \geq\frac{4d^3}{(1+\frac{1}{2d^4})^{d-1}}R(\C)\geq 31 R(\C).
\end{equation}

Let $\mathbf{P}$ denote the convex hull of the centers of the translates of $\C$ in $\Q$.
Then, (\ref{inradius-2}) implies
\begin{equation}\label{inradius-3}
r(\mathbf{P}) \geq r(\Q) - R(\C) \geq \frac{30}{31} r(\Q)\geq \frac{8 \delta_{\rm sep}(\rho ,\C)^{\frac{d-1}{d}} r(\C)}{9d} \cdot n^{\frac{1}{d}}.
\end{equation}
Hence, by (\ref{inradius-3}) and Lemma~\ref{prop:Boroczky},
\begin{equation}\label{eq:V(Q)}
\vol_d(\Q) \geq \vol_d(\mathbf{P}) \geq \left( 1+ \frac{9 d \rho R(\C)}{4 \delta_{\rm sep}(\rho ,\C)^{\frac{d-1}{d}} r(\C)} \cdot \frac{1}{n^{\frac{1}{d}}} \right)^{-d} \cdot \frac{n \vol_d(\C)}{\delta_{\rm sep}(\rho ,\C)},
\end{equation}
which implies in a straightforward way that
\begin{equation}\label{eq:4}
\vol_d(\Q) \geq \left( 1 + \frac{9 d \rho R(\C)}{ 4 \delta_{\rm sep}(\rho ,\C) r(\C)} \cdot \frac{1}{n^{\frac{1}{d}}} \right)^{-d} \cdot \frac{n \vol_d(\C)}{\delta_{\rm sep}(\rho ,\C)}.
\end{equation}
Note that (\ref{A-F-inequality}) (see the restated version for $j=d$) implies that $M_i(Q) \geq \left( \frac{\vol_d(Q)}{\kappa_d} \right)^{\frac{i}{d}} \kappa_i$.
Then, replacing $\vol_d(Q)$ by the right-hand side of (\ref{eq:4}), and using the convexity of the function $x \mapsto (1+x)^{-i}$ for $x > -1$ yields the lower bound for $M_i(\Q)$in Theorem~\ref{main-result}.

Finally, we prove the statement about the spherical shape of $\Q$, that is, the inequality (\ref{main}).
As in \cite{Bo}, let
\[
\theta(d) = \frac{1}{2^{\frac{d+3}{2}} \sqrt{2\pi} \sqrt{d} (d-1)(d+3)} \min \left\{ \frac{3}{\pi^2 d (d+2) 2^d} , \frac{16}{(d\pi)^{\frac{d-1}{4}}}  \right\} .
\]
Using the inequality $\frac{\kappa_{d-1}}{\kappa_d} \geq \sqrt{\frac{d}{2\pi}}$ (cf. \cite{BGW}) and (6) of \cite{GSch}, we obtain
\[
\left( \frac{S(\Q)}{S(\B^d)} \right)^{d} \left( \frac{\vol_d(\B^d)}{\vol_d(\Q)} \right)^{d-1} - 1 \geq \theta(d) \cdot \left( 1 - \frac{r(\Q)}{R(\Q)} \right)^{\frac{d+3}{2}}
\]
(see also (5) of \cite{Bo}).
Substituting (\ref{eq:S(Q)}) and (\ref{eq:V(Q)}) into this inequality, we obtain
\[
\left( 1 + \frac{2 \delta_{\rm sep}(\rho ,\C)^{\frac{1}{d}} \rho R(\C)}{r(\C)} \cdot \frac{1}{n^{\frac{1}{d}}} \right)^{d(d-1)} \left( 1 + \frac{9 d \rho R(\C)}{4 \delta_{\rm sep}(\rho ,\C)^{\frac{d-1}{d}} r(\C)} \cdot \frac{1}{n^{\frac{1}{d}}} \right)^{d(d-1)} \geq \left( \frac{S(\Q)}{S(\B^d)} \right)^{d} \left( \frac{\vol_d(\B^d)}{\vol_d(\Q)} \right)^{d-1} .
\]
By the assumptions $d \geq 2$ and (\ref{assumption}), it follows that
\begin{equation}\label{eq:final}                                                                                                                    
4d^2 (d-1)  \frac{\rho R(\C)}{\delta_{\rm sep}(\rho ,\C) r(\C)} \cdot \frac{1}{n^{\frac{1}{d}}} \geq \theta(d) \left( 1 - \frac{r(\Q)}{R(\Q)} \right)^{\frac{d+3}{2}} .
\end{equation}
Note that by \cite{PSz}, $\frac{1}{\delta_{\rm sep}(\rho ,\C)} \leq \frac{2^{\frac{3d}{2}} \cdot \sqrt{\left( \begin{array}{c} \frac{d(d+1)}{2} \\ d \end{array} \right) }}{ (d+1)^{\frac{d}{2}} \pi^{\frac{d}{2}} \Gamma\left( \frac{d}{2} + 1 \right)}$. This and (\ref{eq:final}) implies (\ref{main}), finishing the proof of Theorem~\ref{main-result}.

\vspace{1cm}

\medskip

\noindent K\'aroly Bezdek \\
\small{Department of Mathematics and Statistics, University of Calgary, Calgary, Canada}\\
\small{Department of Mathematics, University of Pannonia, Veszpr\'em, Hungary}\\
\small{\texttt{bezdek@math.ucalgary.ca}}

\bigskip

\noindent and

\bigskip

\noindent Zsolt L\'angi \\
\small{MTA-BME Morphodynamics Research Group and Department of Geometry}\\ 
\small{Budapest University of Technology and Economics, Budapest, Hungary}\\
\small{\texttt{zlangi@math.bme.hu}}

\end{document}